\documentclass[11pt]{amsart}

\usepackage[margin=1in]{geometry}
\usepackage{amsmath,amssymb,amsthm,mathtools}
\usepackage{mathrsfs}
\usepackage{enumitem}
\usepackage[colorlinks=true,linkcolor=blue,citecolor=blue,urlcolor=blue]{hyperref}
\usepackage[nameinlink,noabbrev]{cleveref}
\usepackage{graphicx}
\usepackage{caption}

\newtheorem{theorem}{Theorem}

\newtheorem{lemma}{Lemma}
\newtheorem{remark}{Remark}

\newcommand{\T}{\mathbb T}
\newcommand{\R}{\mathbb R}

\newcommand{\diver}{\operatorname{div}}

\begin{document}
\title{Turbulent Flame Speed Can Increase under Curvature Smoothing}
\author[H. V. Tran, J. Xin, Y. Yu]{Hung V. Tran, Jack Xin, Yifeng Yu}

\date{}

\thanks{
H. V. Tran is partially supported by NSF grant DMS-2348305. 
J. Xin is partially supported by NSF grant DMS-2309520.
}
\address[H. V. Tran]
{
Department of Mathematics, 
University of Wisconsin-Madison, Van Vleck Hall, 480 Lincoln Drive, Madison, Wisconsin 53706, USA}
\email{hung@math.wisc.edu}

\address[J. Xin]
{
Department of Mathematics, 
University of California at Irvine, 
California 92697, USA}
\email{jack.xin@uci.edu}

\address[Y. Yu]
{
Department of Mathematics, 
University of California at Irvine, 
California 92697, USA}
\email{yifengy@uci.edu}

\begin{abstract}
Curvature effects are expected to smooth flame-front wrinkles and thereby reduce turbulent flame speed. We construct a smooth three-dimensional periodic shear flow for which introducing Markstein curvature diffusivity instead increases the effective flame speed predicted by the level-set G-equation. This gives the first counterexample, within this model, to monotone slowdown under curvature smoothing and contrasts with the rigorous monotonicity result for two-dimensional shear flows. The example reveals a genuinely multidimensional mechanism in which local curvature smoothing can enhance, rather than suppress, large-scale front propagation.
\end{abstract}

\maketitle

\section{Introduction}
The G-equation is a well-known model in turbulent combustion
\cite{M1951,M1964,Peters2000,W1985}. It is formulated as the level-set equation
\[
    G_t+s_l|DG|+V(x)\cdot DG=0
    \qquad \text{in } \mathbb{R}^n\times (0,\infty).
\]
For each time $t>0$, the set $\{G>0\}$ represents the unburned region, while
the set $\{G<0\}$ represents the burned region. Here $V$ denotes the velocity
field of the ambient fluid, and $s_l$ is the laminar flame speed, that is, the
local burning velocity. See Figure \ref{fig:front}.

When curvature effects are taken into account, the local burning velocity is 
modified to

\[
    s_l=s_L(1-d\delta_L\, \kappa)_+ .
\]
Here, $s_L$ is a positive constant,  $d$ is the Markstein number, $\delta_L$ the flame thickness, and $\kappa$ denotes the mean curvature of the
flame front. The curvature correction term $d\delta_L\, \kappa$ was introduced in \cite{M1951} as a
phenomenological model for the variation of temperature along the flame front
\cite{M1951}. The cutoff $(\cdot)_+$ is imposed to avoid negative local burning velocity because burned material cannot
become unburned \cite{ZR1994}. For background on level-set methods and the rigorous mathematical
theory of curvature-driven equations, including the relevant notions
of viscosity solutions, we refer to
\cite{OS,EvansSpruck1991,ChenGigaGoto1991,OsherFedkiw2003,
Evans2010,tran2021}.

One important application of the G-equation is to model the turbulent flame
speed, namely the effective or averaged burning velocity induced by the ambient
flow and the geometry of the flame front. Turbulent flame speed, also called the effective burning velocity in the homogenization literature, is one of the central quantities in turbulent combustion \cite{Peters2000}. 

Under the homogenization scaling \(V(x)\mapsto V(x/\varepsilon)\),
\(\varepsilon\) represents the microscopic flow scale. Assuming
\(\delta_L\sim\varepsilon\), the curvature coefficient \(d\delta_L\) is
of order \(d\varepsilon\). For an initially planar front
\(P\cdot x=0\), the effective burning velocity in the direction
\(P\in\mathbb{R}^3\) is the asymptotically constant macroscopic
propagation rate, characterized by the ergodic constant
\(\widetilde c(d,P)\) of the associated cell problem. We refer to the survey paper \cite{XYR2024} for further discussion of the physical background, corresponding homogenization theory, and related mathematical results.  Throughout this paper, the terms
\emph{turbulent flame speed} and \emph{effective burning velocity} are
used interchangeably for the homogenized macroscopic propagation rate.

The existence of an effective burning velocity for general two-dimensional periodic incompressible flows was proved
in \cite{GZXY2024} using a game-theoretic approach. 

In three dimensions, however, the effective burning velocity may fail to exist even for simple shear
flows when the flow intensity exceeds a certain threshold \cite{MMTXY2025}.

A natural and important question is how the predicted effective burning
velocity depends on the curvature effect, in particular on the Markstein number
$d$. This question has been widely studied in the combustion literature, 
where the prevailing intuition is that positive
Markstein curvature effects suppress flame-front wrinkling and thereby
tend to reduce the turbulent burning velocity \cite{R1995}. See \cite{ChaudhuriWuLaw2013} for experimental results. This monotonicity was rigorously proved for two-dimensional shear flows in \cite{LXY2018}.

In this paper,  we construct a three-dimensional example
in which the curvature term increases this effective speed, thereby
answering \cite[Question~8]{XYR2024} in the negative. Thus, the
effective burning velocity need not decrease monotonically with the
Markstein number.


\begin{figure}
\centering
\includegraphics[width=8.7cm]{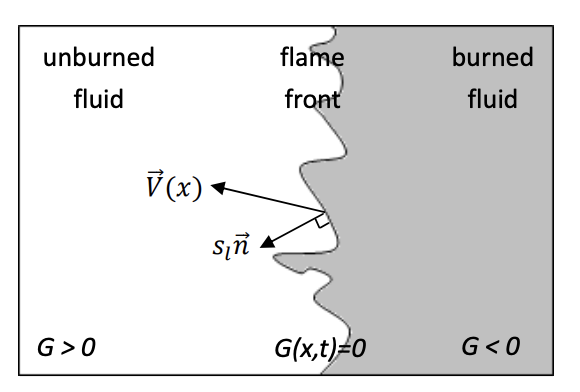}
\caption{Level-set formulation of the front propagation of the G-equation.}
\label{fig:front}
\end{figure}

Without loss of generality, we normalize \(s_L=1\). Fix \(p\in\mathbb{R}^2\), and consider the three-dimensional shear flow
\[
V(x_1,x_2,x_3)=\bigl(0,0,f(x_1,x_2)\bigr), \qquad \text{where } f\in C^\infty(\T^2).
\]

For \(d\geq 0\), the effective burning velocity in the direction \((p,1)\) is characterized, under homogenization, by the ergodic constant \(c(d)\), when it exists, for which the cutoff cell problem
\begin{equation}\label{eq:cell-problem-cutoff}
\left(
1-d\operatorname{div}\left(
\frac{p+D\tilde v}{\sqrt{1+|p+D\tilde v|^2}}
\right)
\right)_{+}
\sqrt{1+|p+D\tilde v|^2}
+f(x)
=
\tilde c(d)
\qquad \text{in }\mathbb{T}^2
\end{equation}
admits a periodic corrector \(\tilde v\). Here and below, we suppress the dependence of \(\tilde c(d)\) on \((p,1)\).

To analyze this problem, we also consider the corresponding non-cutoff cell equation
\begin{equation}\label{eq:cell-problem}
\left(
1-d\operatorname{div}\left(
\frac{p+Dv}{\sqrt{1+|p+Dv|^2}}
\right)
\right)
\sqrt{1+|p+Dv|^2}
+f(x)
=c(d)
\qquad \text{in }\mathbb{T}^2.
\end{equation}
The existence of a periodic solution \(v\) and the associated constant \(c(d)\) for this equation was established in~\cite{GZXY2024}. Moreover, whenever
\[
 c(d)\geq \max_{\mathbb{T}^2} f,
\]
the cutoff is inactive. Consequently, the same pair \((v,c(d))\) also solves the cutoff cell problem (\ref{eq:cell-problem-cutoff}), and \(c(d)=\tilde c(d)\) is therefore the effective turbulent burning velocity for the shear flow \(V\) in the direction \((p,1)\).

\begin{theorem}\label{theo:main} 
For $p=(2,2)$, there exists \(f\in C^{\infty}(\T^2)\) such that

{\rm(i)} $\max_{\T^2}f<0$;

{\rm(ii)} $c(0)=0$;

{\rm(iii)} the following limit exists and is positive:
\[\lim_{d\to 0^{+}}\frac{c(d)}{d}>0.
\]
\end{theorem}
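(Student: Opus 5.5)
The plan is to make $c(0)=0$ hold trivially by building $f$ out of a prescribed corrector, and then to compute $c'(0^+)$ by a first-order perturbation expansion in $d$. The sign of $c'(0^+)$ will be governed by the curvature along an attracting orbit of the front-normal field.

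\textbf{Step 1: the case $d=0$.} I would fix a smooth periodic $u$ on $\T^2$ and define
\[
f(x):=-\sqrt{1+|p+Du(x)|^2}.
\]
Then $u$ is a classical solution of \eqref{eq:cell-problem} with $d=0$ and constant $0$. By uniqueness of the ergodic constant, $c(0)=0$. Moreover $f\le -1$, so (i) holds.

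For the choice of $u$, I would work in the rotated coordinates $s=x_1-x_2$ and $t=x_1+x_2$. I would take $u=u(s)$ periodic with small amplitude. Then
\[
p+Du=(2,2)+u'(s)(1,-1)
\]
never vanishes; this is where $p=(2,2)$ is convenient, since $p$ points along $(1,1)$, which is orthogonal to the direction of $Du$. The unit field
\[
N=\frac{p+Du}{\sqrt{1+|p+Du|^2}}
\]
then defines a flow $\dot x=N(x)$ whose orbits wind in the $(1,1)$ direction. The zeros of $u'$ give closed invariant lines $\{s=s_0\}$. At a zero with $u''(s_0)<0$ the line is attracting, and at a zero with $u''(s_0)>0$ it is repelling. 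I would design $u'$ with exactly one attracting line $s=s_a$ and one repelling line $s=s_r$, and arrange that $W=\sqrt{1+|p+Du|^2}$ is the same constant on both lines.

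\textbf{Step 2: formal expansion.} I would write $v=u+dw+o(d)$ and $c(d)=dc_1+o(d)$. Linearizing \eqref{eq:cell-problem} gives the transport equation
\[
N\cdot Dw=c_1+W\,\diver N .
\]
Integrating this against any invariant measure $\mu$ of the flow of $N$ forces
\[
c_1=-\int W\,\diver N\,d\mu .
\]
On the attracting line, $\diver N=\kappa$ is negative; indeed attraction is exactly $\partial_s\dot s<0$ there. So if the correct selection is the ergodic measure on the attracting line, then $c_1=-W\,\kappa(s_a)>0$, which gives (iii). This is the multidimensional mechanism: attraction of the characteristics is negative divergence of the normal, and the curvature term converts that into a speed-up.

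\textbf{Step 3: rigorous selection (main obstacle).} The transport problem has two ergodic measures, one on each line, and they give opposite signs. I must therefore justify that the curvature term selects the attracting line. Heuristically, the second-order part $-d\,\diver(\cdots)$ acts as a small degenerate diffusion. In a gradient-like flow, small noise concentrates on the stable set.

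Concretely, I would construct a subsolution and a supersolution of \eqref{eq:cell-problem} of the form $u+d\,w_\pm(s)$, with constants $d(c_1\mp\epsilon)$ respectively. Since everything depends only on $s$, the equation becomes a first-order ODE in $w'$. Away from the repelling line, $w_\pm'$ is obtained by integrating the transport equation from the attracting line, where the right-hand side $c_1+W\kappa$ vanishes by the choice of $c_1$. In a boundary layer of width $O(\sqrt d)$ or $O(d)$ around the repelling line, the second-order term $d\,w''$-type contributions must absorb the incompatibility between the two values of $c_1$. There I would exploit the freedom to let $w_\pm'$ jump steeply; the degenerate-elliptic term has the right sign to repair the error in the viscosity sense.

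Comparison for \eqref{eq:cell-problem}, from the existence theory of \cite{GZXY2024} and the standard comparison for the ergodic problem, then yields
\[
d(c_1-\epsilon)\le c(d)\le d(c_1+\epsilon)
\]
for small $d$. Hence the limit in (iii) exists and equals $c_1>0$. I expect this boundary-layer construction at the repelling line, and verifying the sub/supersolution inequalities there, to be the hardest part. If it fails, my fallback is to replace the explicit layer by a one-dimensional reduction: for $s$-dependent data, \eqref{eq:cell-problem} is an ODE for $v'(s)$, and $c(d)$ can then be analyzed directly by a shooting/periodicity argument.
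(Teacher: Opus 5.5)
Your construction cannot satisfy (iii). The gap is therefore not a technicality of the boundary layer in Step~3; the selection claim itself is wrong. Since $Du=u'(s)(1,-1)$ is orthogonal to $p=(2,2)$, you have $|p+Du|\ge|p|$. Hence $f=-\sqrt{9+2u'(s)^2}\le-3=-\sqrt{1+|p|^2}$, with equality on the lines $\{u'=0\}$.

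Write the curvature part of the operator as $-d\operatorname{tr}\bigl[(I-\tfrac{P\otimes P}{1+|P|^2})D^2v\bigr]$ with $P=p+Dv$; the matrix is positive definite. Evaluate the cell problem at a minimum point $x_0$ of the corrector $v_d$ (in viscosity language, touch $v_d$ from below by a constant). There $P=p$ and $D^2v_d\ge0$, so
\[
c(d)\le\sqrt{1+|p|^2}+f(x_0)\le 3+\max_{\T^2}f=0\qquad\text{for every }d>0 .
\]
Thus $\lim_{d\to0^+}c(d)/d\le0$ for every choice of $u=u(s)$. In particular, no sub-solution with constant $d(c_1-\epsilon)>0$ exists, whatever layer you insert at $s_r$. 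The heuristic that the degenerate curvature term acts like small noise concentrating on the attracting line has the wrong sign here. Your fallback would show the same thing: with $f$ depending only on $x_1-x_2$, the cell problem collapses to the ODE of a two-dimensional shear flow, which is exactly the setting where monotone decrease is known \cite{LXY2018}.

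The paper avoids both obstructions by taking the corrector $v=a(x_1)+a(x_2)$, which is genuinely two-dimensional. First, $p+Dv=(2+a'(x_1),2+a'(x_2))$ has modulus below $|p|$ wherever $a'<0$ in both variables, so $\max f>-3$; the bound above shows this is necessary. Second, the characteristic field has the global first integral $r=F(x_1)-F(x_2)$, so all orbits are closed with rotation vector $(1,1)$ and none is attracting. This is what makes the selection work. Since $\chi(r)$ is constant along orbits, the extra term $\sqrt d\,\chi(r)$ in the test function produces no $O(\sqrt d)$ cross term. It contributes $d\,|D\chi|^2/(2\sqrt{1+|p+Dv|^2})\ge0$, which yields $\lim c(d)/d=\max_r m(r)$, the largest orbit average of $-g$. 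The two-mode choice $a_\delta$ then gives an orbit with $\int g=-\tfrac{2\pi}{3}\delta^3+O(\delta^4)<0$. In your setting every non-invariant orbit runs from $\{s=s_r\}$ to $\{s=s_a\}$, so there is no nonconstant first integral and this device is unavailable.
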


\begin{remark}\label{rmk:tocutoff}
    
As an immediate corollary of Theorem \ref{theo:main}, we get that when $d$ is small enough, $c(d)>0$, and $c$ is not monotonically decreasing in a neighborhood of $0$. 
Note that when $d$ is small enough, $c(d)-\max_{\T^2}f>0$ and thereby $c(d)=\tilde c(d)$. 
Accordingly, the above result shows that the turbulent flame speed need not be a monotonically decreasing function of the Markstein diffusivity $d$. This contrasts with the usual expectation in the combustion literature, as well as with the mathematical result known for two-dimensional shear flows \cite{LXY2018}.
Specifically, Theorem \ref{theo:main} 
gives a negative answer to \cite[Question 8]{XYR2024}. Also, since the ergodic constant \(c(d)\) depends continuously
on \(f\) in the \(C^0\)-norm, for a fixed $d_*>0$, the strict inequality
\(c(d_*)>c(0)>\max_{\T^2}f\) persists under sufficiently small \(C^0\)
perturbations of \(f\). Thus, the failure of monotonicity is robust.

\end{remark}

\begin{remark}\label{rmk:non-zero-d}
The failure of monotonicity is not restricted to the behavior near $d=0$. 
For example, when $d$ is close to $1$, one can construct another example, based on a different mechanism, for which $c(d)$ in \eqref{eq:cell-problem} is not monotonically decreasing. 
See Appendix \ref{sec:appendix2}.
\end{remark}

\subsection*{Relation to weak-flow perturbation theory}

Consider a weak perturbation of a constant flow,
\[
V_\delta=V_0+\delta V_1,
\qquad 0<\delta\ll1.
\]
Previous  asymptotic analysis
\cite[Section~2.2]{LXY2018}, under a Diophantine condition on the
propagation direction \(P\), predicts that for each fixed \(d>0\),
\[
c_\delta(d,P)<c_\delta(0,P)
\]
when \(\delta\) is sufficiently small. It remains open whether, for a
fixed sufficiently small \(\delta>0\), this slowdown inequality persists
as \(d\to0^+\) in a Diophantine direction. Addressing this question
requires constructing smooth correctors through a small-divisor
perturbative scheme and controlling the resulting error estimates
uniformly as \(d\to0^+\); it may therefore be viewed as a KAM-type
problem.

Our constructed family is also a perturbation of a constant flow, but
depends nonlinearly and smoothly on a parameter \(\delta\). Although the
corresponding direction \(P=(2,2,1)\) is rational, a direct expansion in
\(\delta\), for each fixed \(d>0\), gives
\[
c_\delta(d,P)<c_\delta(0,P)=0
\]
when \(\delta\) is sufficiently small. On the other hand, our main
theorem shows that, for every fixed sufficiently small \(\delta>0\),
\[
c_\delta(d,P)>c_\delta(0,P)
\]
when \(d>0\) is sufficiently close to zero. Thus, either sign can occur
within the same family, depending on the relative sizes of \(d\) and
\(\delta\). In particular, the fixed-\(d\) expansion is not uniform as
\(d\to0^+\), and the small-amplitude and vanishing-curvature limits need
not commute.

\subsection*{Acknowledgment} We would like to mention that ChatGPT 5.5 Plus played a significant role in the development of the proof. We discuss this in more detail in the next section.

\section{Exploratory Journey
Leading to the Proof}

In this section, we briefly describe how the problem and its solution were explored. 
Initially, we tried to prove monotonicity, which is a natural direction given the common expectation in the combustion literature. A natural approach is to differentiate the cell problem \eqref{eq:cell-problem} with respect to the Markstein number $d$. Formally, this leads to a linearized equation for $w=\partial_d v$, which is of the form
\begin{equation}\label{eq:linearized-d}
-\operatorname{tr}\big(A(Dv,D^2v)D^2w\big)
+
B(Dv,D^2v)\cdot Dw
=
c'(d).
\end{equation}
The hope was that this linearized equation might have some special structure forcing the constant $c'(d)$ on the right-hand side to have a definite sign. However, there seems to be no clear mathematical reason why such a sign condition should hold in general.

In two-dimensional shear flows, $c'(d)$ was rigorously shown to be negative in \cite{LXY2018}, 
since the cell problem reduces to a one-dimensional ordinary differential equation (ODE), and $c'(d)$ admits an explicit, although somewhat sophisticated, expression in terms of $w$. This makes the analysis much more accessible. Such a reduction is not available if $n=3$, when the cell problem is genuinely two-dimensional and considerably more complicated.

When we first asked several large language models, including ChatGPT 5.5 Plus, about this question, most of them followed the same natural strategy: they differentiated the equation with respect to $d$ and then tried to use a maximum-principle argument to determine the sign of $c'(d)$. These attempts, however, did not lead to a proof of monotonicity.

To simplify the issue, instead of considering all $d$, we focused on the behavior near $d=0$. 
We note that  \eqref{eq:cell-problem} is not necessarily differentiable at $d=0$, since the inviscid equation
\[
\sqrt{1+|p+Dv|^2}+f(x)=c(0)
\qquad \text{in } \mathbb{T}^2
\]
may have multiple nonsmooth solutions (see \cite[Section 4.8]{tran2021}). 
Nevertheless, if one formally differentiates \eqref{eq:cell-problem} at $d=0$, one obtains a linear transport equation of $w$, which might not be well-posed:
\begin{equation}\label{eq:formal-linearized-zero}
\frac{Dw\cdot (p+Dv)}{\sqrt{1+|p+Dv|^2}}
-
\sqrt{1+|p+Dv|^2}\,
\operatorname{div}\left(
\frac{p+Dv}{\sqrt{1+|p+Dv|^2}}
\right)
=
c'(0).
\end{equation}
Now let $u(x)=p\cdot x+v(x)$,
\begin{align*}
W(x)
=
\frac{Du(x)}{\sqrt{1+|Du(x)|^2}},
\qquad
g(x)
=
\sqrt{1+|Du(x)|^2}\,
\operatorname{div}
\left(
\frac{Du(x)}{\sqrt{1+|Du(x)|^2}}
\right).
\end{align*}
Consider a trajectory $\xi:[0,T]\to \mathbb{R}^2$ associated with the vector field $W$, namely
\[
    \dot{\xi}(t)=W(\xi(t)),
\qquad \text{such that} \qquad
    \xi(T)-\xi(0)\in \mathbb{Z}^2 .
\]
Integrating \eqref{eq:formal-linearized-zero} along $\xi$ over $[0,T]$, and using the periodicity of $w$, leads formally to
\begin{equation}\label{eq:orbit-average}
    -\frac{1}{T}\int_0^T g(\xi(t))\,dt = c'(0).
\end{equation}
We point out that the above equality is in general not correct since the left-hand side depends on the choice of $\xi$. A correct formula in our example will be given later. 

\subsection*{Remark on the Role of ChatGPT 5.5 Plus}
At this point, we started searching for an example where $c'(0)$ is positive. In this step, we received useful assistance from ChatGPT 5.5 Plus. The interaction was centered around two main directions: first, to look for a choice of smooth $\mathbb{Z}^2$-periodic $v$ and a corresponding trajectory $\xi$ for which the orbit integral \[ \int_0^T g(\xi(t))\,dt \] is negative; and second, to explore how the preceding formal calculation could be turned into a rigorous argument.

Through a sequence of interactions, ChatGPT 5.5 Plus played a substantial role in suggesting the main formal steps leading to the construction. It helped identify promising ansatzes, organize the linearized calculation, and formulate the orbit-average mechanism that indicated how a positive value could arise. The resulting ideas were then carefully verified, refined, and made rigorous by the authors.

Heuristically, monotonicity holds in the two-dimensional case because the cell problem reduces to a one-dimensional profile equation. In that setting, the curvature term acts as a genuine smoothing mechanism: increasing the Markstein number suppresses front wrinkling and therefore decreases the shear-induced enhancement of the effective burning velocity. However, this intuition relies strongly on the resulting one-dimensional structure. In the genuinely two-dimensional case of the cell problem, there is no comparable ODE reduction, and in particular, there is no diffusion along the tangential direction in the same effective sense, so that the degenerate nature of curvature smoothing unveils itself. 
Our counterexample shows that the one-dimensional monotonicity intuition in the two-dimensional shear flows does not extend to the corresponding cell problem for three-dimensional shear flows.


\subsection*{Connection with vanishing-viscosity selection}

The orbit-average mechanism found here is reminiscent of selection
principles in vanishing-viscosity Hamilton--Jacobi equations
\cite{AIPS2005}. When the Aubry set consists of periodic orbits, the
leading-order effect of a small viscosity can be governed by extremal
averages of the viscous correction along those orbits. In the present
curvature problem, the right derivative of the ergodic constant at
\(d=0\) is likewise determined by an extremal average of the curvature
contribution along periodic characteristics. This suggests a broader
principle: for first-order Hamilton--Jacobi dynamics perturbed by a
small, possibly degenerate, second-order operator, the macroscopic
effect of the regularization may be controlled by orbit selection
rather than by pointwise smoothing alone.

\section*{Summary and Outlook}

To our knowledge, we construct the first counterexample in three
dimensions, within the curvature \(G\)-equation, showing that the
prevailing expectation of monotone slowdown under curvature smoothing
need not hold. Although the curvature term acts locally as a smoothing
mechanism, its effect on the large-scale propagation speed can have the
opposite sign in a genuinely multidimensional setting. Together with
the fixed-\(d\) weak-flow expansion, our result shows that curvature may
either decrease or increase the effective burning velocity, depending
on the relative sizes of the flow perturbation and the Markstein
diffusivity. The acceleration found here emerges in the
vanishing-curvature regime. The development of the example also
illustrates how advanced AI tools can assist mathematical exploration,
with all resulting claims and proofs independently verified and
established rigorously by the authors.

\section{Proof of Theorem \ref{theo:main}}
Fix
\[
p=(2,2).
\]
Let \(a:\mathbb R\to \mathbb R\) be a smooth $1$-periodic function such that $\|a'\|_{L^\infty}\leq 1$.
For \(x=(x_1,x_2)\in \mathbb R^2\), define
\begin{equation}\label{eq:v-and-u}
    v(x_1,x_2)=a(x_1)+a(x_2),\qquad
    u(x)=v(x)+p\cdot x.
\end{equation}
Then,
\[
    Du(x)=\bigl(2+a'(x_1),\,2+a'(x_2)\bigr).
\]
As $\|a'\|_{L^\infty}\leq 1$, $|Du(x)|\geq \sqrt{2}$ for $x\in \R^2$.
Let
\[
    f(x)=-\sqrt{1+|Du(x)|^2}.
\]
Then, $f<0$ and $v$ is a periodic smooth solution to
\begin{equation}
    \sqrt{1+|p+Dv|^2}+f(x)=0  \qquad \text{in } \mathbb{T}^2.
\end{equation}
Hence,
\[
c(0)=0.
\]

It remains to establish item (iii) in Theorem \ref{theo:main}.
Note that every gradient flow $\eta(t)$ of \(u\) is of the form
\[
    \eta(t)=\bigl(s(t),s(t+t_0)\bigr) \quad \text{for $t\in \mathbb{R}$},
\]
where \(s(t)\) solves
\[
    s'(t)=2+a'(s(t)), \qquad t\in\mathbb R,
\]
and \(t_0\in\mathbb R\) is fixed. Since \(s(t)\) increases by one over each period of the corresponding flow, the projection of this curve onto \(\mathbb T^2\) is periodic, with rotation vector \((1,1)\).
Recall that
\[
 W(x)=\frac{Du(x)}{\sqrt{1+|Du(x)|^2}},\qquad
g(x) =\sqrt{1+|Du(x)|^2}\diver \left(\frac{Du(x)}{\sqrt{1+|Du(x)|^2}}\right).
\]
Denote by $\Gamma$  the set of all flows:
\[
 \Gamma=\{\xi:\mathbb{R}\to \mathbb{R}^2\,:\,    \dot \xi(t)=W(\xi(t)) \text{ for } t\in \mathbb R\}.
\]
Since each such curve $\xi$ is a reparametrization of a gradient flow line of $u$, its projection onto the flat torus $\mathbb T^2=\mathbb R^2/\mathbb Z^2$ is also periodic with a rotation vector $(1,1)$. 
Let $T_\xi>0$ denote the minimum positive period of the
projected orbit on $\mathbb{T}^2$; equivalently,
\[
\xi(t+T_\xi)=\xi(t)+(1,1)
\qquad\text{for all }t\in\mathbb{R}.
\]

\begin{lemma}
Denote by
\[
    \lambda
    =
    \inf_{\psi\in C^1(\mathbb T^2)}
    \sup_{x\in\mathbb T^2}
    \bigl(W(x)\cdot D\psi(x)-g(x)\bigr).
\]
Then,
    \begin{equation}\label{eq:lambda-orbit-formula}
    \lambda
    =
    -\min_{\xi\in \Gamma}
    \frac1{T_\xi}
    \int_0^{T_\xi}g(\xi(t))\,dt.
\end{equation}
\end{lemma}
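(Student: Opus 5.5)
We prove two inequalities. Write $m=\min_{\xi\in\Gamma}\frac{1}{T_\xi}\int_0^{T_\xi} g(\xi(t))\,dt$. We must first check that this minimum is attained. Every $\xi\in\Gamma$ is determined by its starting point, and since $W$ is smooth the period $T_\xi$ and the orbit average depend continuously on that point. The orbit average is also invariant under time shifts along $\xi$. Hence the minimization is over a compact set (for instance, starting points on the transversal $\{x_1=0\}\subset\mathbb T^2$), and the minimum is achieved. Note also that $W\neq 0$ everywhere, because $|Du|\ge\sqrt2$.

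\textbf{Lower bound $\lambda\ge -m$.} Take any $\psi\in C^1(\mathbb T^2)$ and let $\xi$ be a minimizing orbit. Along $\xi$ we have $\frac{d}{dt}\psi(\xi(t))=W(\xi(t))\cdot D\psi(\xi(t))$. Since $\xi(T_\xi)=\xi(0)+(1,1)$ and $\psi$ is periodic, integrating over $[0,T_\xi]$ gives
\[
0=\frac1{T_\xi}\int_0^{T_\xi}\bigl(W\cdot D\psi-g\bigr)(\xi(t))\,dt+\frac1{T_\xi}\int_0^{T_\xi}g(\xi(t))\,dt\le \sup_{\mathbb T^2}(W\cdot D\psi-g)+m .
\]
Taking the infimum over $\psi$ yields $\lambda\ge -m$.

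\textbf{Upper bound $\lambda\le -m$.} For every $\varepsilon>0$ we construct $\psi$ with $W\cdot D\psi-g\le -m+\varepsilon$ on $\mathbb T^2$; in fact we aim to solve $W\cdot D\psi=g-h$ exactly, where $h$ is constant along orbits. We use the explicit flow structure. Points $x$ are parametrized by orbit label and time along the orbit: with the flow $\Phi_t$ of $W$, we choose coordinates $(\theta,\tau)$ with $\theta\in\mathbb T$ indexing the orbit (its intersection with the transversal $\Sigma=\{x_1\equiv 0\}$) and $\tau\in[0,T(\theta))$ the time elapsed since leaving $\Sigma$. This is a smooth chart, because $\dot x_1=W_1>0$ (recall $2+a'\ge1$), so $\Sigma$ is a global cross-section. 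Each orbit crosses $\Sigma$ exactly once per period, since its rotation vector is $(1,1)$.

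Let $h(\theta)=\frac{1}{T(\theta)}\int_0^{T(\theta)}g\,dt$ be the orbit average; it is smooth in $\theta$ and satisfies $h\ge m$. Define
\[
\psi(\Phi_\tau(\theta))=\int_0^\tau \bigl(g(\Phi_s(\theta))-h(\theta)\bigr)\,ds .
\]
This vanishes at $\tau=T(\theta)$, so $\psi$ is well defined and periodic on $\mathbb T^2$, and it satisfies $W\cdot D\psi=g-h\le g-m$. Therefore $W\cdot D\psi-g\le -m$, which gives $\lambda\le -m$, and in fact the infimum is attained.

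\textbf{Main obstacle.} The delicate step is the smoothness of $\psi$ across $\Sigma$. Each piece is smooth in $(\theta,\tau)$, but we must ensure that matching at $\tau=0$ and $\tau=T(\theta)$ produces a $C^1$ function on the torus. Continuity is automatic. For the derivative transverse to the flow, note that the return map $\theta\mapsto\theta'$ is the identity here: orbits are closed in $\mathbb T^2$, so the first-return map is the identity on $\Sigma$. Since $g-h$ is smooth, the derivative of $\psi$ along $\Sigma$ computed from either side differentiates the same integral, with the difference involving $\partial_\theta\int_0^{T(\theta)}(g-h)\,ds=0$. Hence the derivatives match, and $\psi\in C^1$.

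If this gluing proves awkward, there is a fallback: mollify $\psi$ along $\Sigma$ and accept an error $\varepsilon$, which still gives $\lambda\le -m+\varepsilon$ for every $\varepsilon>0$.
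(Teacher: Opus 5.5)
Your proof is correct and follows essentially the same route as the paper. The lower bound comes from integrating $W\cdot D\psi$ over a closed orbit, and the upper bound from solving $W\cdot D\psi = g-\bar g$ along each orbit starting on the section $\{x_1=0\}$, where $\bar g$ is the orbit average. The only difference is presentational: the paper works in the explicit coordinates $r=F(x_1)-F(x_2)$, $\theta=F(x_1)$, in which $\psi$ is visibly smooth and doubly periodic, whereas you get the same smoothness from the fact that the first-return map to $\Sigma$ is the identity.
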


\begin{proof}
Write
\[
    P(t)=2+a'(t).
\]
Then, $1\leq P(t)\leq 3$ and $Du(x)=(P(x_1),P(x_2))$.
Thus,
\[
    \sqrt{1+|Du|^2}=\sqrt{1+P(x_1)^2+P(x_2)^2},\qquad
    W(x)
    =
    \frac{(P(x_1),P(x_2))}
    {\sqrt{1+P(x_1)^2+P(x_2)^2}}.
\]
Define
\[
    F(s)=\int_0^s \frac{d\tau}{P(\tau)}.
\]
Then, $F'(s)=1/P(s)$, and
\[
    F(s+1)-F(s)=L=\int_0^1\frac{d\tau}{P(\tau)}.
\]
In addition, the function $K(x_1,x_2)=F(x_1)-F(x_2)$ is constant along each curve \(\xi\in\Gamma\) since $DK\cdot \dot \xi=0$. Therefore, for each \(r\in\mathbb R\), the level set
\[
    C_r:=\{x=(x_1,x_2)\in\mathbb R^2:\ F(x_1)-F(x_2)=r\}
\]
coincides with a curve in \(\Gamma\). Consequently, the family $\{C_r\}_{r\in\mathbb R}$ foliates \(\mathbb R^2\), $C_r+(1,1)=C_r$, and for all $k\in \mathbb{Z}$,
\begin{equation}\label{eq:curve-period}
C_{r+kL}=C_r+k(1,0).
\end{equation}

A direct computation gives the following explicit formula for \(g\):
\begin{align}\label{eq:g-explicit}
    g(x)
    =\frac{\Delta u(1+|Du|^2)-Du\cdot D^2u\cdot Du}{1+|Du|^2}
    =
    \frac{
        P'(x_1)\bigl(1+P(x_2)^2\bigr)
        +
        P'(x_2)\bigl(1+P(x_1)^2\bigr)
    }{
        1+P(x_1)^2+P(x_2)^2
    }.
\end{align}

\noindent {\bf Step 1.}  First, we prove the lower bound. Choose $\xi\in \Gamma$. For any \(\psi\in C^1(\mathbb T^2)\),
\[
    \frac{d}{dt}\psi(\xi(t))
    =
    W(\xi(t))\cdot D\psi(\xi(t)).
\]
Then,
\[
    \int_0^{T_\xi}
    W(\xi(t))\cdot D\psi(\xi(t))\,dt
    =
    0.
\]
Hence,
\[
    \sup_{\mathbb T^2}(W\cdot D\psi-g)
    \geq
    \frac1{T_\xi}
    \int_0^{T_\xi}
    \bigl(W\cdot D\psi-g\bigr)(\xi(t))\,dt 
    =
    \frac1{T_\xi}
    \int_0^{T_\xi}
    -g(\xi(t))\,dt.
\]
Taking the maximum over all periodic orbits gives
\[
    \sup_{\mathbb T^2}(W\cdot D\psi-g)
    \geq
    \max_{\xi\in \Gamma}
    \frac1{T_\xi}
    \int_0^{T_\xi}-g(\xi(t))\,dt.
\]
Now, take the infimum over \(\psi\) to imply
\[
    \lambda
    \geq
    \max_{\xi\in \Gamma}
    \frac1{T_\xi}
    \int_0^{T_\xi}-g(\xi(t))\,dt
    =-\min_{\xi\in \Gamma}
    \frac1{T_\xi}
    \int_0^{T_\xi}g(\xi(t))\,dt.
\]

\noindent {\bf Step 2.} We prove the reverse inequality. Introduce coordinates $(r,\theta)$ adapted to the flow: 
\[
\begin{cases}
    r=F(x_1)-F(x_2),\\
    \theta=F(x_1),
\end{cases}
\qquad \Longleftrightarrow \qquad
\begin{cases}
    x_1=G(\theta),\\
    x_2=G(\theta-r),
\end{cases}
\]
where $G=F^{-1}$.
In these coordinates, the vector field has the form
\[
    W\cdot D=A(r,\theta)\partial_\theta,\qquad
\text{where} \qquad
    A(r,\theta)=\frac{1}{\sqrt{1+|Du(r,\theta)|^2}}.
\]
For  each $r\in \mathbb{R}$, let $\xi_r\in \Gamma$ be the orbit that coincides with $C_r$. Let $T_{\xi_r}$ be its minimum positive period. 
Let
$\Phi(r,\theta)=\bigl(G(\theta),G(\theta-r)\bigr)$
denote the inverse coordinate map, and, for a function
\(h:\mathbb{R}^2\to\mathbb{R}\), write
\[
\widehat h(r,\theta):=(h\circ\Phi)(r,\theta)
=h\bigl(G(\theta),G(\theta-r)\bigr).
\]
Normalize the lift \(\xi_r\) by requiring that \(\theta(0)=0\).
Since the orbit \(\xi_r\) lies in \(C_r\), its \(r\)-coordinate is
constant, and hence
\[
\xi_r(t)=\Phi\bigl(r,\theta(t)\bigr).
\]
Moreover,
\[
\dot{\theta}(t)
=D\theta(\xi_r(t))\cdot\dot{\xi}_r(t)
=A\bigl(r,\theta(t)\bigr)>0.
\]
After one period on \(\mathbb{T}^2\), the lifted orbit advances by
\((1,1)\). Therefore,
\[
\theta(T_{\xi_r})-\theta(0)
=F\bigl(x_1(0)+1\bigr)-F\bigl(x_1(0)\bigr)
=L.
\]
Consequently, using $dt=\frac{d\theta}{A(r,\theta)}$,
we obtain
\[
\int_0^{T_{\xi_r}} h(\xi_r(t))\,dt
=
\int_0^{T_{\xi_r}}
\widehat h\bigl(r,\theta(t)\bigr)\,dt
=
\int_0^L
\frac{\widehat h(r,\theta)}{A(r,\theta)}\,d\theta.
\]
Hereafter, by a harmless abuse of notation, we write \(h(r,\theta)\)
in place of \(\widehat h(r,\theta)\).
Due to \eqref{eq:curve-period}, the above is a smooth $L$-periodic function of $r$ if $h=h(x)\in C^\infty(\T^2)$. 
For each $r\in \mathbb{R}$, define the time average of \(-g\) along $\xi_r$:
\begin{equation}\label{eq:mr}
  m(r)
    =
    \frac1{T_{\xi_r}}
    \int_0^{T_{\xi_r}} -g(\xi_r(t))\,dt.   
\end{equation}
Then, $m(r)$ is smooth, $L$-periodic, and satisfies  
\[
    \int_0^L
    \frac{m(r)+g(r,s)}{A(r,s)}\,ds=0.
\]
Now define
\begin{equation}\label{eq:psi}
     \psi(r,\theta)
    =
    \int_0^\theta
    \frac{m(r)+g(r,s)}{A(r,s)}\,ds.
\end{equation}
Then, \(\psi\) is also smooth and  $L$-periodic in both \(r\) and \(\theta\). Accordingly, \(\psi\) defines a smooth periodic function on \(\mathbb R^2\).
Moreover,
\[
    W\cdot D\psi
    =
    A(r,\theta)\partial_\theta\psi
    =
    m(r)+g(r,\theta).
\]
Therefore,
\[
    W\cdot D\psi-g=m(r) \qquad \text{in } \mathbb{R}^2.
\]
Hence,
\[
    \sup_{\mathbb T^2}(W\cdot D\psi-g)
    =
    \max_r m(r)
    =
    \max_{\xi\in \Gamma}
    \frac1{T_\xi}
    \int_0^{T_\xi}-g(\xi(t))\,dt.
\]
Thus,
\[
    \lambda
    \leq
    \max_{\xi \in \Gamma}
    \frac1{T_\xi}
    \int_0^{T_\xi}-g(\xi(t))\,dt.
\]

Combining the two inequalities gives the conclusion. 
    
\end{proof}

\begin{lemma}\label{lem:limit}
    \[
    \lim_{d\to 0^+}\frac{c(d)}{d}=\lambda. 
    \]
\end{lemma}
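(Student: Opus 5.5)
The plan is to prove matching upper and lower bounds, $\limsup_{d\to0^+} c(d)/d\le\lambda$ and $\liminf_{d\to0^+}c(d)/d\ge\lambda$. In both directions I compare the corrector $v_d$ of \eqref{eq:cell-problem} with explicit smooth test functions of the form $w=v+d\psi+\dots$, where $v$ is the smooth solution at $d=0$ from \eqref{eq:v-and-u}. Write
\[
\mathcal F_d[w]=\Bigl(1-d\operatorname{div}\tfrac{p+Dw}{\sqrt{1+|p+Dw|^2}}\Bigr)\sqrt{1+|p+Dw|^2}+f .
\]
The operator $-\sqrt{1+|q|^2}\operatorname{div}(q/\sqrt{1+|q|^2})=-\operatorname{tr}\bigl((I-\tfrac{q\otimes q}{1+|q|^2})D^2\bigr)$ is degenerate elliptic. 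Hence, for smooth $w$ (and $v_d$ a viscosity solution):
\begin{itemize}
\item at a minimum point $x_0$ of $v_d-w$ we get $c(d)\le \mathcal F_d[w](x_0)\le\sup\mathcal F_d[w]$;
\item at a maximum point $x_0$ we get $c(d)\ge\mathcal F_d[w](x_0)$.
\end{itemize}
Everything rests on the expansion, valid for smooth bounded perturbations $\eta$ with bounded derivatives,
\[
\mathcal F_d[v+\eta]=W\cdot D\eta+\frac{|D\eta|^2-(W\cdot D\eta)^2\,(\cdots)}{2\sqrt{1+|Du|^2}}-d\,g+O(|D\eta|^3+d|D\eta|+d|D^2\eta|),
\]
which uses $\sqrt{1+|Du|^2}+f=0$. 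The key point is that the quadratic form is nonnegative, being the Hessian of the convex map $q\mapsto\sqrt{1+|q|^2}$.

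\textbf{Upper bound.} Given $\psi\in C^1(\mathbb T^2)$ and $\delta>0$, mollify $\psi$ to a smooth $\psi_\delta$ with $\sup(W\cdot D\psi_\delta-g)\le\sup(W\cdot D\psi-g)+\delta$. Take $w=v+d\psi_\delta$. The expansion gives $\mathcal F_d[w]=d(W\cdot D\psi_\delta-g)+O(d^2)$. Comparing at a minimum of $v_d-w$, which exists by periodicity, yields $c(d)\le d\,(\sup(W\cdot D\psi-g)+\delta)+C_\delta d^2$. Dividing by $d$ and letting $d\to0$, then $\delta\to0$, then taking the infimum over $\psi$ gives $\limsup c(d)/d\le\lambda$.

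\textbf{Lower bound.} This is the main obstacle. The natural choice is $\psi$ from Step 2 of the previous lemma, \eqref{eq:psi}, which gives $W\cdot D\psi-g=m(r)$. But $m$ is not constant, and a maximum point of $v_d-v-d\psi$ could sit where $m(r)<\lambda=\max m$. To repair this I exploit the fact that $r$ is invariant under $W$: $W\cdot Dr=0$, so adding a function of $r$ costs nothing at first order. A function of $r$ of size $\sqrt d$ then produces a nonnegative gain of order $d$ through the convex quadratic term. Concretely, fix $\delta>0$ and a maximizer $r^*$ of $m$, and take
\[
w=v+d\psi+\sqrt d\,\varphi(r),
\]
where $\varphi:\mathbb R\to\mathbb R$ is smooth, non-periodic, and satisfies three conditions:
\begin{itemize}
\item $\varphi'$ is bounded, with bounded $\varphi''$;
\item $\varphi'\ge0$ for $r>r^*$ and $\varphi'\le0$ for $r<r^*$, so that $\varphi(r)\to+\infty$ as $|r|\to\infty$;
\item $\varphi'(r)^2\,\dfrac{|Dr|^2}{2\sqrt{1+|Du|^2}}\ge \lambda-m(r)-\delta$ everywhere.
\end{itemize}
These are compatible: near $r^*$ the right-hand side of the last condition is negative, so $\varphi'$ may pass smoothly through $0$ there. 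Elsewhere we take $\varphi'=\pm\sqrt{2\sqrt{1+|Du|^2}\,(\lambda-m+\delta)}/|Dr|$, suitably smoothed. Here $|Dr|$ is bounded below because $F'=1/P\in[1/3,1]$, and the factor $(W\cdot D\eta)^2$ vanishes for this part of $\eta$ since $W\cdot Dr=0$.

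With this $w$, the expansion gives
\[
\mathcal F_d[w]\ge d\,m(r)+d\,\varphi'^2\frac{|Dr|^2}{2\sqrt{1+|Du|^2}}-Cd^{3/2}\ge d(\lambda-\delta)-C_\delta d^{3/2},
\]
where the cross terms between $d\psi$ and $\sqrt d\varphi$, and the curvature of $\sqrt d\varphi$, are all $O(d^{3/2})$. Since $v_d$ is periodic and $w\to+\infty$ as $|r|\to\infty$, while staying bounded in the orbit direction modulo periods, the function $v_d-w$ attains a maximum. Comparing there yields $c(d)\ge d(\lambda-\delta)-C_\delta d^{3/2}$, hence $\liminf c(d)/d\ge\lambda-\delta$ for every $\delta>0$.

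The delicate points I expect to check are the following:
\begin{itemize}
\item that the remainder bounds are uniform in $r$, which holds because $\varphi',\varphi''$ are bounded and $\psi$ is periodic;
\item that a global maximum of $v_d-w$ actually exists; this needs $\varphi(r)\to\infty$ as $|r|\to\infty$ in both directions, and uses the fact that $d\psi$ is periodic in $(r,\theta)$;
\item that the viscosity comparison is valid at that point, which requires $w\in C^2$.
\end{itemize}
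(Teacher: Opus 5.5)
Your proposal is correct and follows the paper's proof. The upper bound comes from testing with $v+d\psi$, and the lower bound from $v+\sqrt d\,\chi(r)+d\psi$ with $\psi$ as in \eqref{eq:psi}; since $W\cdot Dr=0$, the $\sqrt d$-corrector contributes at order $d$ only the nonnegative convex gain $d\,\chi'(r)^2|Dr|^2/(2\sqrt{1+|Du|^2})$. The only difference is how the corrector is globalized. You take $\varphi$ non-periodic and V-shaped about a maximizer $r^*$, and obtain a maximum of $v_d-w$ from coercivity in $r$ together with $(1,1)$-invariance. The paper instead keeps $\chi$ $L$-periodic by imposing $\int_0^L\chi'\,dr=0$ and placing the sign change of $\chi'$ inside $\{m>\lambda-\varepsilon/2\}$, so the maximum exists on $\mathbb{T}^2$ automatically. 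In either version $\chi'$ must depend on $r$ alone, so use the uniform bounds $|Dr|\ge\sqrt2/3$ and $\sqrt{1+|Du|^2}\le\sqrt{19}$ rather than your pointwise formula, which depends on $x$ along each orbit.
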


\begin{proof}
Define the nonlinear operator
\begin{equation*}
    H_d[\phi](x)=
    \left(
        1-d\diver\left(
        \frac{p+D\phi}{\sqrt{1+|p+D\phi|^2}}
        \right)
    \right)
    \sqrt{1+|p+D\phi|^2}+f(x).    
\end{equation*}
Let $v_d$ be a solution to the corresponding cell problem, that is,
\[
    H_d[v_d](x)=c(d) \qquad\text{in }\mathbb T^2.
\]

\noindent {\bf Step 1.} We first establish the upper bound.  Take \(\psi\in C^\infty(\T^2)\), and consider
\[
    \phi=v+d\psi.
\]
Then,
\[
    p+D\phi=Du+dD\psi,
\]
and we have the uniform expansion
\[
    \sqrt{1+|Du+dD\psi|^2}
    =
    \sqrt{1+|Du|^2}+d\,W\cdot D\psi+O(d^2).
\]
Also,
\[
    \frac{Du+dD\psi}{\sqrt{1+|Du+dD\psi|^2}}
    =
    W+O(d),
\]
and hence
\[
    \diver\left(
    \frac{Du+dD\psi}{\sqrt{1+|Du+dD\psi|^2}}
    \right)
    =
    \diver W+O(d).
\]
Therefore
\begin{equation}\label{eq:linear-expansion}
    H_d[v+d\psi]
    \leq d\max_{\T^2}\bigl(W\cdot D\psi-g\bigr)+O(d^2).
\end{equation}
Choosing a maximum point $x_1$ of  $(v+d\psi)-v_d$, we have that
\begin{align*}
c(d)=H_d[v_d](x_1) \leq H_d[v+d\psi](x_1)\leq d\max_{\T^2}\bigl(W\cdot D\psi-g\bigr)+O(d^2).
\end{align*}
Since $\psi$ is arbitrary, this shows that 
\[
    \limsup_{d\to0^+}\frac{c(d)}d\leq \lambda.
\]

\noindent {\bf Step 2.} Next, we prove the lower bound,  which requires a slightly more careful perturbed test function method. The reason is that the transport equation
\[
    W\cdot D\psi-g=\lambda
\]
may not be solvable globally unless all orbit averages of \(g\) agree.
Let
\[
    r=F(x_1)-F(x_2).
\]
$m(r)$ and $\psi$ be from \eqref{eq:mr} and \eqref{eq:psi}, respectively. 
Then, $\lambda=\max_r m(r)$
and
\begin{equation}\label{eq:orbit-transport}
    W\cdot D\psi-g=m(r) \qquad \text{in } \mathbb{R}^2.
\end{equation}

Let \(\chi=\chi(r)\) be a smooth $L$-periodic function to be chosen. Since \(Du\cdot Dr=0\), we have
\[
   Du\cdot D\chi=W\cdot D\chi=0 \qquad \text{in } \mathbb{R}^2.
\]
Consider the perturbed function
\[
    \phi_d=v+\sqrt d\,\chi(r)+d\psi.
\]
Then,
\[
    p+D\phi_d
    =
    Du+\sqrt d\,D\chi+dD\psi,
\]
\[
    |p+D\phi_d|^2=|Du|^2+d|D\chi|^2+2dD\psi\cdot Du+O(d^{3/2}),
\]
and 
\begin{equation}\label{eq:lower-expansion}
    H_d[\phi_d]
    =
    d\left(
        W\cdot D\psi-g+\frac{|D\chi|^2}{2\sqrt{1+|Du|^2}}
    \right)
    +O(d^{3/2}).
\end{equation}
Using \eqref{eq:orbit-transport}, this becomes
\[
    H_d[\phi_d]
    =
    d\left(
        m(r)+\frac{|D\chi|^2}{2\sqrt{1+|Du|^2}}
    \right)
    +O(d^{3/2}).
\]
Since $|D\chi|\geq \tfrac{\sqrt{2}}{3}|\chi'(r)|$, for any fixed $\varepsilon>0$, we could 
choose \(\chi=\chi(r)\) so that
\begin{equation}\label{eq:approx-ineq}
    m(r)+\frac{|D\chi(r)|^2}{2\sqrt{1+|Du|^2}}
    \geq \lambda-\varepsilon
    \qquad\text{in }\mathbb T^2.
\end{equation}
Indeed, let
\[
I=\left\{r\in [0,L]\,:\, m(r)>\lambda - \frac{\varepsilon}{2} \right\}.
\]
Construct $\chi' \in C^\infty(\R)$ that is $L$-periodic such that, for $K>1$ sufficiently large,
\[
    \chi'(r) = K \qquad \text{ for } r \in [0,L] \setminus I,\qquad
    \int_0^L \chi'(r)\,dr=0.
\]
Then,  $\chi \in C^\infty(\R)$, which is $L$-periodic.
By taking 
\[
K^2>9\sqrt{1+\|Du\|_{L^\infty}^2}\,\left(\max_r m(r) - \min_r m(r)\right),
\]
we see that \eqref{eq:approx-ineq} holds.
Then, \eqref{eq:lower-expansion} gives
\[
    H_d[\phi_d]
    \geq
    d(\lambda-\varepsilon)+O(d^{3/2}) \qquad\text{in }\mathbb T^2.
\]
Choosing a maximum point $x_2$ of  $v_d-\phi_d$, we have that
\[
    c(d)=H_d[v_d](x_2)\geq H_d[\phi_d](x_2)
    \geq
    d(\lambda-\varepsilon)+O(d^{3/2}).
\]
Dividing by \(d\) and letting \(d\to0^+\), we obtain
\[
    \liminf_{d\to0^+}\frac{c(d)}d
    \geq
    \lambda-\varepsilon.
\]
Since \(\varepsilon>0\) is arbitrary,
\[
    \liminf_{d\to0^+}\frac{c(d)}d
    \geq
    \lambda.
\]
Combining this inequality with the upper bound concludes the proof.

\end{proof}

\begin{proof}[Proof of Theorem \ref{theo:main}]
By Lemma \ref{lem:limit},
    \[
    \lim_{d\to 0^+}\frac{c(d)}{d}=\lambda. 
    \]
We now choose \(a=a_\varepsilon\) as in Section \ref{sec:appendix1}, with \(\varepsilon>0\) sufficiently small.
Then, by Lemma \ref{lem:positive}, we see that $\lambda>0$. 
The proof is complete.
\end{proof}

\section{An example with a negative orbit average}\label{sec:appendix1}

We give an explicit example showing that the orbit average of \(g\) can be negative along at least one orbit:
\[
\int_{0}^{T_\xi}g(\xi(t))\,dt<0.
\]

The example is based on a small two-mode perturbation. Two modes are
needed to prevent cancellation. Let
\[
a_\delta(t)
=
-\frac{\delta}{2\pi}\cos(2\pi t)
-\frac{\delta}{4\pi}\cos(4\pi t),
\qquad 0<\delta\ll 1.
\]
Then
\[
a_\delta'(t)
=
\delta\bigl(\sin(2\pi t)+\sin(4\pi t)\bigr).
\]
For \(\delta>0\) sufficiently small,
\[
|a_\delta'(t)|<1
\qquad\text{for all }t\in\mathbb R.
\]
Set
\[
h(t)=\sin(2\pi t)+\sin(4\pi t),
\qquad
P_\delta(t)=2+\delta h(t).
\]
Then
\[
Du_\delta(x)
=
\bigl(P_\delta(x_1),P_\delta(x_2)\bigr),
\]
and
\[
W_\delta(x)
=
\frac{\bigl(P_\delta(x_1),P_\delta(x_2)\bigr)}
{\sqrt{1+P_\delta(x_1)^2+P_\delta(x_2)^2}}.
\]
As before, define
\[
F_\delta(s)
=
\int_0^s\frac{d\tau}{P_\delta(\tau)},
\qquad
L_\delta
=
\int_0^1\frac{d\tau}{P_\delta(\tau)}.
\]
We consider the half-shifted level curve
\[
C_\delta
=
\left\{
F_\delta(x_1)-F_\delta(x_2)
=
\frac{L_\delta}{2}
\right\}.
\]
Let \(\xi_\delta\) be the associated flow.

\begin{lemma}\label{lem:positive}
When \(\delta>0\) is sufficiently small,
\[
\int_0^{T_{\xi_\delta}}g(\xi_\delta(t))\,dt<0.
\]
\end{lemma}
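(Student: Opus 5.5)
The plan is a perturbative expansion of the orbit integral in $\delta$. The first-order term in $\delta$ should vanish identically. The sign is then decided by an explicit, negative $\delta^2$ coefficient computed by Fourier orthogonality.

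\textbf{Rewriting the integral.} I would first use the computation from Step 2 of the proof of the first lemma: $dt=d\theta/A$ and $\theta=F_\delta(x_1)$, so $d\theta=dx_1/P_\delta(x_1)$. This rewrites the integral in the statement of Lemma~\ref{lem:positive} as an integral over one period of $x_1\in[0,1]$ along $C_\delta$:
\[
I(\delta)=\int_0^{1}\frac{P_\delta'(x_1)\bigl(1+P_\delta(x_2)^2\bigr)+P_\delta'(x_2)\bigl(1+P_\delta(x_1)^2\bigr)}{P_\delta(x_1)\sqrt{1+P_\delta(x_1)^2+P_\delta(x_2)^2}}\,dx_1 .
\]
This uses the explicit formula \eqref{eq:g-explicit} for $g$. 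Here $x_2=X_\delta(x_1)$ is determined by $F_\delta(x_2)=F_\delta(x_1)-L_\delta/2$. Everything is smooth in $\delta$, and $P_\delta'=\delta h'$ carries an explicit factor $\delta$. So $I(\delta)=\delta I_1+\delta^2 I_2+O(\delta^3)$, and it suffices to show $I_1=0$ and $I_2<0$.

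\textbf{Expanding the level curve.} Set $H(s)=\int_0^s h$, which is $1$-periodic since $\int_0^1 h=0$. Then $F_\delta(s)=s/2-\tfrac{\delta}{4}H(s)+O(\delta^2)$ and $L_\delta=1/2+O(\delta^2)$. Solving for $x_2$ gives
\[
X_\delta(x_1)=x_1-\tfrac12+\tfrac{\delta}{2}\bigl(H(x_1-\tfrac12)-H(x_1)\bigr)+O(\delta^2).
\]
At order $\delta$, with $P\equiv 2$ and $S=3$, the integrand is $\tfrac{5}{6}\bigl(h'(x_1)+h'(x_1-\tfrac12)\bigr)$. Its integral vanishes by periodicity, so $I_1=0$.

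\textbf{The second-order coefficient.} The order-$\delta^2$ terms come from three sources:
\begin{itemize}[label={}]
\item (a) the $O(\delta)$ expansion of the weights $(1+P_\delta(x_2)^2)$, $1/P_\delta(x_1)$ and $1/S$, multiplied by $\delta h'$;
\item (b) the $O(\delta)$ shift in the argument $x_2$ inside $h'(x_2)$, namely $h''(x_1-\tfrac12)\cdot\tfrac{\delta}{2}\bigl(H(x_1-\tfrac12)-H(x_1)\bigr)$;
\item (c) the $x_1$-integration parametrization itself, which is exact and contributes nothing extra.
\end{itemize}
Diagonal products such as $h'(x_1)h(x_1)$ integrate to zero, being derivatives of periodic functions. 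The same holds for $h'(x_1-\tfrac12)h(x_1-\tfrac12)$. What survives is a quadratic form in the cross-correlations $\int_0^1 h'(t)h(t-\tfrac12)\,dt$, $\int_0^1 h''(t-\tfrac12)H(t)\,dt$, and similar integrals.

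\textbf{Role of the two modes.} For a single mode $\sin(2\pi k t)$, the half-shift multiplies the mode by $(-1)^k$, and terms of the form $\int h'(t)h(t-\tfrac12)$ vanish for a pure sine. This is why the paper uses two modes. With $h(t-\tfrac12)=-\sin 2\pi t+\sin 4\pi t$, the relevant integrals reduce, by orthogonality of $\sin$ and $\cos$ modes, to integrals such as $\int_0^1 H(t)\,h''(t-\tfrac12)\,dt$. In these, the two modes enter with opposite relative signs, so no cancellation occurs. I would compute each term explicitly in Fourier variables and add them to get a definite number times $\delta^2$, which I expect to be negative. Since $I_1=0$ and the remainder is $O(\delta^3)$, this gives $I(\delta)<0$ for $\delta$ small.

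\textbf{Main obstacle.} The hard part is the bookkeeping of the $\delta^2$ coefficient: correctly expanding $X_\delta$, including the shift term (b), and not losing any cross term. To guard against sign errors, I would double-check the result by redoing the computation in the $(r,\theta)$ coordinates, or with a symbolic series expansion.
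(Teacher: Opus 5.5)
There is a genuine gap at the decisive step. Your setup is correct and matches the paper: the $x_1$-parametrized integrand, the first-order level curve (your $\tfrac{\delta}{2}(H(x_1-\tfrac12)-H(x_1))$ is the paper's $\eta_1=\cos(2\pi x_1)/(2\pi)$), and $I_1=0$. But $I_2$ is not negative. It is exactly $0$, so the argument as planned gives no sign.

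The reason is visible in your sources (a) and (b). At order $\delta^2$, every term is a product of one sine-series factor ($h(x_1)$, $h(x_1-\tfrac12)=-\sin 2\pi x_1+\sin 4\pi x_1$, or $h''(x_1-\tfrac12)$) with one cosine-series or constant factor ($h'(x_1)$, $h'(x_1-\tfrac12)$, or $\eta_1$). Hence the integrand is odd in $x_1$ and has zero mean. In particular, the cross-correlations you single out, $\int_0^1 h'(t)h(t-\tfrac12)\,dt$ and $\int_0^1 H(t)h''(t-\tfrac12)\,dt$, both vanish. The half-shift multiplies each mode by $\pm1$ but never turns a sine into a cosine, so a second mode does not help at this order. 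Equivalently, $x\mapsto -x$ carries the half-shifted orbit for $\delta$ onto the time-reversed half-shifted orbit for $-\delta$, and $g_{-\delta}(-x)=-g_\delta(x)$. Thus $I(-\delta)=-I(\delta)$ and all even-order coefficients vanish. The paper records this as $\int_0^1A_1=\int_0^1A_2=0$.

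The sign is decided at order $\delta^3$, and that is where the two modes actually matter. A cubic product of Fourier modes has nonzero mean only under a resonance such as $1+1-2=0$, which a single mode cannot produce. The paper expands the level curve to second order ($\eta_2$, which also involves the $O(\delta^2)$ term of $L_\delta$) and finds $\int_0^1A_3=-\tfrac{2\pi}{3}$, so $I(\delta)=-\tfrac{2\pi}{3}\delta^3+O(\delta^4)$.

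You can reach the same number without $\eta_2$ by parametrizing with $\theta$. Since $dx_i/d\theta=P_\delta(x_i)$ and $dt=S\,d\theta$ with $S=\sqrt{1+P_1^2+P_2^2}$, one checks $g\,dt=S\,d\log(P_1P_2)-dS$. Hence $I(\delta)=\oint S\,d(a+b)$ with $a=\log(P_1/2)$ and $b=\log(P_2/2)$. Expand
\[
S=3+\tfrac43(a+b)+\tfrac43(a^2+b^2)-\tfrac{8}{27}(a+b)^2+O(\delta^3).
\]
Every term through order $\delta^2$ in $I$ is then an exact differential. This shows the vanishing of $I_1,I_2$ is structural, and gives
\[
I(\delta)=\frac43\oint\bigl(a^2\,db+b^2\,da\bigr)+O(\delta^4)
=\frac{\delta^3}{6}\int_0^1\bigl(h^2k'+k^2h'\bigr)\,ds+O(\delta^4),
\qquad k(s)=h\bigl(s-\tfrac12\bigr).
\]
Here $a=\tfrac{\delta}{2}h(s)+O(\delta^2)$ and $b=\tfrac{\delta}{2}k(s)+O(\delta^2)$ in $C^1$, with $s=\theta/L_\delta$; the half shift is exact in $\theta$. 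A short Fourier computation gives $\int_0^1(h^2k'+k^2h')\,ds=-4\pi$. Hence $I(\delta)=-\tfrac{2\pi}{3}\delta^3+O(\delta^4)$, in agreement with the paper.
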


\begin{proof}
For a given \(x_1\), let \(x_2=y(x_1)\) be determined by
\[
F_\delta(x_1)-F_\delta(y(x_1))
=
\frac{L_\delta}{2}.
\]
A direct computation shows that
\[
L_\delta
=
\frac12+O(\delta^2)
\]
and
\begin{equation}\label{eq:y-expansion}
y(x_1)
=
x_1-\frac12
+\delta\eta_1(x_1)
+\delta^2\eta_2(x_1)
+O(\delta^3),
\end{equation}
where
\[
\eta_1(x_1)
=
\frac{\cos(2\pi x_1)}{2\pi},
\]
and
\[
\eta_2(x_1)
=
\frac{
\bigl(8\sin^4(\pi x_1)-2\sin^2(\pi x_1)+3\bigr)
\sin(\pi x_1)\cos(\pi x_1)
}{6\pi}.
\]
Recall that
\begin{equation}\label{eq:g-explicit-two-mode}
g(x_1,x_2)
=
\frac{
P_\delta'(x_1)\bigl(1+P_\delta(x_2)^2\bigr)
+
P_\delta'(x_2)\bigl(1+P_\delta(x_1)^2\bigr)
}{
1+P_\delta(x_1)^2+P_\delta(x_2)^2
}.
\end{equation}
Along the orbit,
\[
\frac{dx_1}{dt}
=
\frac{P_\delta(x_1)}
{\sqrt{1+P_\delta(x_1)^2+P_\delta(y(x_1))^2}}
>0.
\]
Therefore,
\begin{equation}\label{eq:gdt}
\begin{aligned}
g(\xi(t))\,dt
&=
E(\delta,x_1)\,dx_1\\
&=
\frac{
P_\delta'(x_1)\bigl(1+P_\delta(y(x_1))^2\bigr)
+
P_\delta'(y(x_1))\bigl(1+P_\delta(x_1)^2\bigr)
}{
P_\delta(x_1)
\sqrt{1+P_\delta(x_1)^2+P_\delta(y(x_1))^2}
}\,dx_1.
\end{aligned}
\end{equation}
Thus, by \eqref{eq:y-expansion},
\[
g(\xi(t))\,dt
=
\left(
\delta A_1(x_1)
+\delta^2A_2(x_1)
+\delta^3A_3(x_1)
+O(\delta^4)
\right)\,dx_1,
\]
where, for \(j=1,2,3\),
\[
A_j(x_1)
=
\frac1{j!}\partial_\delta^jE(0,x_1).
\]
More explicitly,
\begin{align*}
A_1&=\frac56(H_1+K_1),\\
A_2&=
\frac{
90\eta_1K_2
-65HH_1
+7HK_1
+52H_1K
-20KK_1
}{108},\\
A_3&=
\frac1{648}\Bigl(
270\eta_1^2K_3
+42\eta_1HK_2
+312\eta_1H_1K_1
-120\eta_1KK_2
-120\eta_1K_1^2\\
&\qquad\qquad
+540\eta_2K_2
+205H^2H_1
+H^2K_1
-172HH_1K
+44HKK_1\\
&\qquad\qquad
+22H_1K^2
+10K^2K_1
\Bigr),
\end{align*}
where
\begin{align*}
&H=h(x_1),
\qquad
H_1=h'(x_1),\\
&K=h(x_1-0.5),
\qquad
K_1=h'(x_1-0.5),\\
&K_2=h''(x_1-0.5),
\qquad
K_3=h'''(x_1-0.5).
\end{align*}
The first two coefficients have zero average:
\[
\int_0^1A_1(x_1)\,dx_1=0,
\qquad
\int_0^1A_2(x_1)\,dx_1=0.
\]
The cubic coefficient satisfies
\[
\int_0^1A_3(x_1)\,dx_1
=
-\frac{2\pi}{3}.
\]
Consequently,
\[
\begin{aligned}
\int_0^{T_{\xi_\delta}}g(\xi_\delta(t))\,dt
&=
\int_0^1
\left(
\delta A_1(x_1)
+\delta^2A_2(x_1)
+\delta^3A_3(x_1)
\right)\,dx_1
+O(\delta^4)\\
&=
-\frac{2\pi}{3}\delta^3
+O(\delta^4).
\end{aligned}
\]
Therefore, for all sufficiently small \(\delta>0\),
\[
\int_0^{T_{\xi_\delta}}
g(\xi_\delta(t))\,dt<0.
\]
\end{proof}

\appendix
\section{A supplementary uncutoff cell-problem mechanism}\label{sec:appendix2}

In this section, we record another construction showing that $c(d)$ need not be monotonically decreasing in $d$. This example is based on a different mechanism from the one used in the main text. It was also developed with substantial assistance from ChatGPT 5.5 Plus, which helped suggest the main perturbative steps; the computations and verification were then checked and completed by the authors.

We work on $\mathbb{T}^2=\mathbb{R}^2/(2\pi\mathbb{Z})^2$ and use the normalized average
\[
    \langle h\rangle=\frac{1}{(2\pi)^2}\int_{\mathbb{T}^2}h(y)\,dy.
\]
Consider the forced graph mean curvature cell problem
\begin{equation}\label{eq:appendix-cell}
    \left(1-d\,\operatorname{div}
    \frac{Du}{\sqrt{1+|Du|^2}}\right)
    \sqrt{1+|Du|^2}+f_N(x)=c_N(d)
    \qquad \text{in }\mathbb{T}^2 .
\end{equation}
Let
\[
    \phi(y_1,y_2)=\cos y_1+\cos y_2-\cos(y_1+y_2),
\]
and fix $L>2$. Define
\[
    F(y)=L\Delta \phi(y)
    =
    L\bigl(-\cos y_1-\cos y_2+2\cos(y_1+y_2)\bigr),
\]
and, for a large integer $N$,
\[
    f_N(x)=F(Nx).
\]
We claim that, for all sufficiently large $N$, the corresponding ergodic constant satisfies
\[
    c_N'(1)>0.
\]
Thus, $d\mapsto c_N(d)$ is locally increasing near $d=1$.

The idea is to introduce the high-frequency scaling
\[
    y=Nx,\qquad \mu=N^{-2},\qquad u(x)=\mu U(Nx).
\]
Then, $D_xu=N^{-1}D_yU$ and $D_x^2u=D_y^2U$. Writing derivatives with respect to $y$, \eqref{eq:appendix-cell} becomes
\begin{equation}\label{eq:appendix-scaled}
    -d\left(I-\frac{\mu DU\otimes DU}{1+\mu|DU|^2}\right):D^2U
    +\sqrt{1+\mu|DU|^2}+F(y)=c .
\end{equation}
At $\mu=0$, this reduces to
\[
    -d\Delta U+1+F(y)=c.
\]
Since $F=L\Delta\phi$, the zero-average solution is
\[
    U_0(y;d)=\frac{L}{d}\phi(y),
    \qquad c(0,d)=1.
\]
By the implicit function theorem, for $\mu$ sufficiently small and $d$ near $1$, there is a smooth branch
\[
    U_{\mu,d}=U_0+\mu U_1+O(\mu^2),
    \qquad
    c(\mu,d)=1+\mu c_1(d)+O(\mu^2),
\]
and, by uniqueness of the ergodic constant,
\[
    c_N(d)=c(N^{-2},d).
\]

Expanding \eqref{eq:appendix-scaled} to first order in $\mu$ gives
\[
    -d\Delta U_1
    +d(DU_0\otimes DU_0):D^2U_0
    +\frac12 |DU_0|^2
    =
    c_1(d).
\]
Averaging over $\mathbb{T}^2$ eliminates the Laplacian term, hence
\[
    c_1(d)
    =
    \left\langle
    d(DU_0\otimes DU_0):D^2U_0
    +\frac12 |DU_0|^2
    \right\rangle .
\]
Using $U_0=(L/d)\phi$, we obtain
\[
    c_1(d)
    =
    \frac{L^3}{d^2}
    \left\langle
    (D\phi\otimes D\phi):D^2\phi
    \right\rangle
    +
    \frac{L^2}{2d^2}
    \left\langle |D\phi|^2\right\rangle .
\]
A direct Fourier computation gives
\[
    \left\langle |D\phi|^2\right\rangle=2,
    \qquad
    \left\langle
    (D\phi\otimes D\phi):D^2\phi
    \right\rangle
    =-\frac12 .
\]
Therefore
\[
    c_1(d)
    =
    \frac{L^2}{d^2}
    \left(1-\frac{L}{2}\right).
\]
Since $L>2$, this coefficient is negative. Consequently,
\[
    c_N(d)
    =
    1+\frac{1}{N^2}\frac{L^2}{d^2}
    \left(1-\frac{L}{2}\right)
    +O(N^{-4}),
\]
where the remainder is $C^1$ in $d$ for $d$ near $1$. Differentiating at $d=1$ yields
\[
    c_N'(1)
    =
    -\frac{2}{N^2}L^2\left(1-\frac{L}{2}\right)
    +O(N^{-4})
    =
    \frac{L^2(L-2)}{N^2}
    +O(N^{-4})>0
\]
for all sufficiently large $N$. Hence $c_N(d)$ is locally increasing near $d=1$, and the monotone-decreasing behavior in $d$ also fails away from $d=0$.

\bibliographystyle{alpha}
\bibliography{pnas-TXY}

\end{document}